\newcommand{\field}[1]{\mathbb{#1}}
\newcommand{\Q}{\field{Q}}
\newcommand{\gm}{\field{G}_m}
\newcommand{\qst}[1]{R_{#1} \gm}
\newtheorem{theorem}{Theorem}[section]
\newtheorem{lemma}[theorem]{Lemma}
\newtheorem{proposition}[theorem]{Proposition}
\newtheorem{corollary}[theorem]{Corollary}
\newtheorem*{assumption}{Assumption}
\theoremstyle{definition}
\newtheorem{example}[theorem]{Example}
\newtheorem{definition}[theorem]{Definition}
\theoremstyle{remark}
\DeclareMathOperator{\gal}{Gal}
\numberwithin{equation}{section}
\begin{document}
\title{Linear recurrent sequences providing decomposition law in number fields}
\author{Haruto Hori and Masanari Kida\thanks{This work was supported by JSPS KAKENHI Grant Number 20K03521.}}
\date{\today}
\maketitle
\begin{abstract}
In their recent paper, Rosen, Takeyama, Tasaka, and Yamamoto constructed recurrent sequences 
that provide a decomposition law of primes in a Galois extension.
In this paper, we reconstruct their sequences via the representation theory 
of finite groups and obtain an explicit description of the sequences. \\
2020 \textit{Mathematics Subject Classification.} 11R21, 11B37, 20C15. \\
\emph{Keywords:} Galois extension, prime decomposition law, recurrent sequence,
  group determinant.
\end{abstract}

\section{Introduction} \label{sec:1}
Let $L/\Q$ be a Galois extension of degree $d$ with Galois group 
$G=\gal (L/\Q)$ defined by the monic 
polynomial
\begin{equation} \label{eq:1.1}
 F(X) = c_0 + c_1 X + \cdots + c_{d-1}X^{d-1} + X^d
\end{equation}
with integer coefficients. 
In their paper \cite{MR4755038}, Rosen, Takayama, Tasaka, and Yamamoto
 constructed linear recurrent 
rational sequences $(a_i)_{i \ge 0}$ 
whose characteristic polynomial 
coincides with \eqref{eq:1.1} providing a decomposition law of a prime $p$ in 
$L$. More precisely, for each conjugacy class $K$ of $G$, 
they constructed such a sequence $(a_i)_{i \ge 0}$ 
satisfying 
\begin{equation} \label{eq:1.2}
 a_p \equiv \begin{cases}
	     1 \pmod{p} & \text{ if $\mathrm{Frob}_p \in K$}, \\
	     0 \pmod{p} &  \text{ otherwise}
	    \end{cases}
\end{equation}
for all but finitely many primes $p$. Here we denote by $\mathrm{Frob}_p$ the 
Frobenius automorphism of $p$.

Although their construction relies on studying certain Galois modules associated 
to the recurrent sequence, it can be clarified by using 
the representation theory of finite group and natural multiplicative structure 
of the modules.
The paper aims to explain this relation. 
This approach also enables explicit computation of the sequences.

The following assumption is adopted throughout the paper.

\begin{assumption}
A root $\xi $ of $F(X)$ generates 
a normal basis of $L$:
\[
 L = \bigoplus_{\sigma \in \gal (L/\Q)} \Q \sigma \xi .
\] 
\end{assumption}

Under this assumption, our main theorems are:

\begin{theorem} \label{thm:1.1}
Let $K_j \ (1 \le j \le r)$ be the conjugacy classes of $G$.
Let $(a_{K_j,i})_{i \ge 0}$ be the linear recurrent sequence 
whose characteristic polynomial is $F(X)$ in \eqref{eq:1.1}
satisfying \eqref{eq:1.2} for the conjugacy class $K_j$. 
The first $d$ terms of $(a_{K_j,i})_{i \ge 0}$
are given by the $j$-th column of the matrix
\[
 [\sigma^{-1}\xi^{i-1}]_{1 \le i \le d, \sigma \in G} [\sigma \tau^{-1} \xi]_{\sigma \in G, \tau
\in G}^{-1} [\kappa_{\tau,j}]_{\tau \in G, 1\le j \le r} 
\]
where 
\[
 \kappa_{\tau , j } = \begin{cases}
			      1 & \text{ if } \tau \in K_j, \\
			      0 & \text{ otherwise}.
			     \end{cases}
\]
\end{theorem}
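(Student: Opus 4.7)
The plan is to identify any rational recurrent sequence with characteristic polynomial $F$ as a trace $a_n = \mathrm{Tr}_{L/\Q}(\beta\,\xi^n)$ for some $\beta\in L$, recast \eqref{eq:1.2} as a linear condition on $\beta$, and then recognise that condition as a matrix equation over the group algebra $L[G]$.

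Since $F$ is separable with roots $\{\sigma(\xi)\}_{\sigma\in G}$, the general solution of the recurrence is $a_n=\sum_{\sigma\in G}c_\sigma(\sigma\xi)^n$, and rationality of the sequence forces the Galois equivariance $\rho(c_\sigma)=c_{\rho\sigma}$. Setting $\beta:=c_1$ gives $c_\sigma=\sigma(\beta)$ and hence $a_n=\mathrm{Tr}_{L/\Q}(\beta\,\xi^n)$. Writing $v_\tau:=\tau^{-1}(\beta)$, a reindexing of the sum yields $a_{i-1}=\sum_{\sigma}\sigma^{-1}(\xi^{i-1})\,v_\sigma$, which is exactly the matrix identity expressing the first $d$ terms as $M_1(v_\sigma)_{\sigma\in G}$, with $M_1$ the first matrix in the statement.

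For an unramified prime $p$, pick $\mathfrak p\mid p$ in $L$ and put $\phi=\mathrm{Frob}_{\mathfrak p}$. The congruence $\xi^p\equiv\phi(\xi)\pmod{\mathfrak p}$ together with $\mathrm{Tr}_{L/\Q}(\mathfrak p)\subseteq p\Z$ gives $a_p\equiv\mathrm{Tr}_{L/\Q}(\beta\,\phi(\xi))\pmod p$. Combining \eqref{eq:1.2} with Chebotarev density (each $\phi\in G$ arises as a Frobenius for infinitely many $p$) forces the exact equality
\[
\mathrm{Tr}_{L/\Q}(\beta\,\phi(\xi))=\kappa_{\phi,j}\qquad\text{for every }\phi\in G
\]
in $\Q$. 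Expanding the trace and rewriting via $v_\tau=\tau^{-1}(\beta)$ turns this into $\sum_{\tau\in G}v_\tau\,(\tau^{-1}\phi)(\xi)=\kappa_{\phi,j}$, which in $L[G]$ is the identity $vf=\kappa_j$, where $f:=\sum_{g\in G}g(\xi)[g]$, $v:=\sum_\tau v_\tau[\tau]$, and $\kappa_j:=\sum_{g\in K_j}[g]$.

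To close, note that $\kappa_j$, being a conjugacy-class sum, lies in the centre of $L[G]$ and therefore commutes with $f$; provided $f$ is a unit, $vf=\kappa_j$ is equivalent to $fv=\kappa_j$, which in matrix form reads $M_2(v_\sigma)_{\sigma\in G}=(\kappa_{\sigma,j})_{\sigma\in G}$. Inverting gives $(v_\sigma)_{\sigma\in G}=M_2^{-1}(\kappa_{\sigma,j})_{\sigma\in G}$, so the first $d$ terms form the $j$-th column of $M_1M_2^{-1}M_3$. The principal obstacle is precisely this final pivot between $vf$ and $fv$: the theorem's matrix $M_2$ encodes left multiplication by $f$, whereas the trace identity naturally produces right multiplication, and the exchange rests on centrality of $\kappa_j$ together with invertibility of $f$---equivalently, non-vanishing of the group determinant of $\xi$, which is the point at which the representation theory of $G$, through Frobenius' factorisation of the group determinant into blocks indexed by $\irr(G)$, governs the construction.
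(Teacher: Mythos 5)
Your overall strategy is attractive and genuinely different from the paper's: you parametrize rational solutions by $\beta\in L$ via $a_n=\mathrm{Tr}_{L/\Q}(\beta\xi^n)$, convert \eqref{eq:1.2} into a linear system for $\beta$ by Frobenius congruences plus Chebotarev, and solve that system in $L[G]$, thereby bypassing the paper's construction of the characteristic functions $k_j^*$ through irreducible characters (Lemmas \ref{lem:3.2}--\ref{lem:3.4}). However, the congruence step has a genuine gap. First, the auxiliary claim $\mathrm{Tr}_{L/\Q}(\mathfrak{p})\subseteq p\Z$ is false in general: for $L=\Q(i)$, $p=5$, $\mathfrak{p}=(2+i)$ one has $\mathrm{Tr}(2+i)=4$. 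Second, and more seriously, the congruence you rely on, $a_p\equiv\mathrm{Tr}_{L/\Q}(\beta\,\phi(\xi))\pmod p$ with $\phi=\mathrm{Frob}_{\mathfrak p}$, is not available for the unknown $\beta$ you are analyzing. Reducing $a_p=\sum_{\tau}\tau(\beta)\,\tau(\xi)^p$ term by term modulo $\mathfrak p$ using $y^p\equiv\phi(y)\pmod{\mathfrak p}$ gives $a_p\equiv\sum_{\tau}\tau(\beta)\,(\phi\tau)(\xi)\pmod{\mathfrak p}$, which is the coefficient of $\phi$ in $fv$, whereas $\mathrm{Tr}_{L/\Q}(\beta\,\phi(\xi))=\sum_{\tau}\tau(\beta)\,(\tau\phi)(\xi)$ is the coefficient of $\phi$ in $vf$. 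Taking the trace does not transport a congruence that holds only modulo the single prime $\mathfrak p$ into one modulo $p$, and for nonabelian $G$ and generic $\beta$ the two expressions differ by a fixed nonzero element of $L$ (the maps $\beta\mapsto\tau(\beta)$ are linearly independent and $\sigma\mapsto\sigma\xi$ is injective for a normal-basis generator), so by your own Chebotarev reasoning the asserted congruence cannot hold for infinitely many such $p$; it is true only a posteriori for the special $\beta$ you are trying to pin down, which makes its use here circular.

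The repair is simple and actually shortens your argument: use the correct congruence $a_p\equiv\sum_{\tau}\tau(\beta)(\phi\tau)(\xi)\pmod{\mathfrak p}$. Combined with \eqref{eq:1.2}, this fixed element of $L$ is congruent to the rational number $\kappa_{\phi,j}$ modulo infinitely many primes $\mathfrak p$ with $\mathrm{Frob}_{\mathfrak p}=\phi$, hence equals it; and this system is exactly $fv=\kappa_j$, i.e.\ $[\sigma\tau^{-1}\xi]_{\sigma,\tau}\,(v_\tau)_{\tau}=(\kappa_{\sigma,j})_{\sigma}$, so $A=P\,\Gamma_{\xi}^{-1}\kappa$ follows at once. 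Your final pivot between $vf$ and $fv$ via centrality of $\kappa_j$ then becomes unnecessary (it is correct, and it does show the two formulations are equivalent, but it was only needed because the congruence was written with the wrong order of composition). You should also state explicitly that invertibility of $\Gamma_\xi$ (equivalently, that $f$ is a unit, i.e.\ $\det\Gamma_\xi\neq0$) is precisely the paper's standing normal-basis assumption on $\xi$. With these corrections your proof is a valid alternative to the paper's: it derives the linear system directly from \eqref{eq:1.2} and Chebotarev, whereas the paper instead identifies the target class functions explicitly via the character table and a change of basis in $X(G,L)$.
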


By expanding the inverse matrix 
 $[\sigma \tau^{-1} \xi]^{-1}$, we also obtain an explicit expression of general
terms of the sequences.

\begin{theorem} \label{thm:1.2}
Let $X_{\sigma }$ be the variables indexed by $G$. We define 
$\Gamma = [X_{\sigma \tau^{-1}}]_{\sigma \in G, \  \tau \in G} $
and $\partial_{\tau} =\dfrac{1}{|G|} 
\dfrac{\partial \det \Gamma }{\partial X_{\tau}} $.
We evaluate $X_{\sigma}$  by $ \sigma \xi $
and we denote the resulted values by $\Gamma_{\xi }$ and 
$\partial_{\tau} (\xi)$.
Then, for any integer $i \ge 0$, 
we have
 \[
  a_{K_j, i}= \frac{1}{\det \Gamma_{\xi}} 
\sum_{\substack{\sigma \in G \\ \tau \in K_j }} \sigma \xi^{i}  \partial_{\sigma \tau } (\xi ) .
 \]
Moreover, when we regard $G$ as a transitive subgroup of the symmetric group $S_d$, 
if $G $ is not contained in the alternating subgroup $ A_{d}$, then we set $G^+ = G \cap A_d$ and we choose an element $\delta \in G$ such that $G=G^+ \sqcup \delta G^{+}$.
Then the following formulas are derived:
\[
 a_{K_j, i}= \begin{cases}
\displaystyle  \frac{1}{\det \Gamma_{\xi}} \mathrm{Tr}_{G}  \left( \xi^{i}
 \sum_{\tau \in K_j}  \partial_{\tau} (\xi ) \right) & \text{ if $G \subset A_{d}$},                         \\
\displaystyle	       \frac{1}{\det \Gamma_{\xi}} (\mathrm{Tr}_{G^{+}} -\delta \mathrm{Tr}_{G^{+}}) \left( \xi^{i}
 \sum_{\tau \in K_j}  \partial_{\tau} (\xi )\right) & \text{ if $G \not\subset A_{d}$}. 
	     \end{cases}
\]
\end{theorem}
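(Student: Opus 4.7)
The plan is to start from Theorem~\ref{thm:1.1} and convert the inverse of the group matrix $\Gamma_\xi = [\sigma\tau^{-1}\xi]$ into the language of the derivatives $\partial_\tau$ via Cramer's rule, then exploit a Galois equivariance of $\partial_\tau(\xi)$ to rewrite the resulting sum as a trace.

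First I would identify the cofactors of $\Gamma$. Because $\Gamma$ represents left multiplication by $\sum_\sigma X_\sigma\sigma$ in the group algebra $\Q G$, the inverse $\Gamma^{-1}$ is again of the form $[Y_{\sigma\tau^{-1}}]$; equivalently, the cofactor $C_{\sigma,\tau}$ depends only on $\sigma\tau^{-1}$. The variable $X_\rho$ occurs at exactly the $|G|$ positions $(\sigma,\rho^{-1}\sigma)$ of $\Gamma$, so the chain rule gives $\partial\det\Gamma/\partial X_\rho = |G|\cdot C_{\sigma_0,\rho^{-1}\sigma_0}$ for any fixed $\sigma_0$; hence $C_{\sigma,\tau} = \partial_{\sigma\tau^{-1}}$ as polynomials, and $(\Gamma_\xi^{-1})_{\sigma,\tau} = \partial_{\tau\sigma^{-1}}(\xi)/\det\Gamma_\xi$. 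Inserting this into Theorem~\ref{thm:1.1}, the reindexing $\sigma\mapsto\sigma^{-1}$ in the outer sum combined with the identity $K_j\sigma = \sigma K_j$ (which holds because $K_j$ is a conjugacy class) produces the first formula of Theorem~\ref{thm:1.2} for $i = 0,1,\dots,d-1$. Since the right-hand side is an $L$-linear combination of powers $\sigma(\xi)^i$, each satisfying the recurrence whose characteristic polynomial is $F$, agreement on the first $d$ terms extends it to all $i\ge 0$.

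For the trace formulas, the key step is the equivariance identity
\[ \sigma(\partial_\tau(\xi)) = \mathrm{sgn}(\pi_\sigma)\,\partial_{\sigma\tau}(\xi), \]
where $\pi_\sigma$ denotes left multiplication by $\sigma$ on $G$. The substitution $X_\rho\mapsto X_{\sigma\rho}$ permutes the rows of $\Gamma$ by $\pi_\sigma$, so $\det\Gamma$ transforms by the sign $\mathrm{sgn}(\pi_\sigma)$; the chain rule together with the rationality of the coefficients of $\partial_\tau$ yields the displayed identity upon evaluation at $X_\rho = \rho\xi$. Substituting back, the summand in the first formula becomes $\mathrm{sgn}(\pi_\sigma)\cdot\sigma\bigl(\xi^i\sum_{\tau\in K_j}\partial_\tau(\xi)\bigr)$. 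Because $F$ is the minimal polynomial of $\xi$, the action of $G$ on the $d$ roots is the regular representation, so $\mathrm{sgn}(\pi_\sigma)$ coincides with the sign of $\sigma$ under the embedding $G\hookrightarrow S_d$ used in the statement. When $G\subset A_d$, every such sign is $+1$ and the sum collapses to $\mathrm{Tr}_G$; otherwise, splitting $G = G^+\sqcup\delta G^+$ gives $+1$ on $G^+$ and $-1$ on $\delta G^+$, producing the signed trace $\mathrm{Tr}_{G^+} - \delta\,\mathrm{Tr}_{G^+}$.

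The main obstacle is the equivariance identity for $\partial_\tau(\xi)$: tracking how $\det\Gamma$ transforms under the row shift $X_\rho\mapsto X_{\sigma\rho}$ and matching the resulting sign of the permutation of $G$ with the sign of $\sigma$ in the given embedding $G\hookrightarrow S_d$. Once this is settled, the remaining manipulations are routine bookkeeping.
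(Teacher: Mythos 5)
Your proposal is correct and follows essentially the same route as the paper: Theorem \ref{thm:1.1} combined with the adjugate identity $(\widetilde{\Gamma})_{\sigma,\tau}=\partial_{\tau\sigma^{-1}}$ (the paper's Lemma \ref{lem:4.3}, which you re-derive via the group algebra) and the sign-equivariance $\sigma(\partial_\tau(\xi))=\mathrm{sgn}(\sigma)\,\partial_{\sigma\tau}(\xi)$ (the paper's Lemma \ref{lem:4.4}, proved the same way by a row permutation), followed by the $G^+\sqcup\delta G^+$ splitting. Your minor variations — extending to all $i\ge 0$ by uniqueness in $\mathrm{RS}(F,L)$ instead of the $R_F^i$ computation \eqref{eq:2.3}, and making explicit that the transitive embedding $G\subset S_d$ is the regular action on the roots so the signs match — are sound but do not change the argument.
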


By the formulas in Theorem \ref{thm:1.2}, we can compute 
each term of the sequences theoretically without 
using the recurrence formula.

The outline of the paper is as follows. In Section \ref{sec:2}, we reformulate
the proof in \cite{MR4755038} on the existence of the 
sequence satisfying \eqref{eq:1.2}.
This section concludes with our strategy 
of the proof of Theorem \ref{thm:1.1}, which will be given in Section \ref{sec:3}.
We prove
 the formulas in Theorem \ref{thm:1.2} in Section \ref{sec:4}.
From these formulas, we can directly deduce the rationality of the sequence 
(Corollary \ref{cor:4.5}).
In Section \ref{sec:5}, we give some specific examples of our theorems.
In the final section of this paper, we provide a clue to determine the finite set
of exceptional primes in \eqref{eq:1.2}.

We use the following notation throughout the paper.
For a finite group $G$, let $K_1, \ldots , K_r$ be the conjugacy classes of 
$G$ with representatives $\rho_i \ (1 \le i \le r)$. 
When we regard $G$ as an ordered set, the order is chosen so that 
it is compatible with the decomposition 
\[
 G = \bigsqcup_{i=1}^r K_i
\]
by furnishing arbitrary order on $K_i$. We fix this order once and for all.
For an element $\rho \in G$, the centralizer of $\rho$ is denoted by 
$Z_G (\rho)$.
We denote by $\mathrm{Irr} (G)$ the set of the irreducible characters of $G$ 
and, for $\psi \in \mathrm{Irr} (G)$,
by $\Lambda_{\psi}$ a complex irreducible representation
affording the character $\psi$. We also fix an order on the set $\mathrm{Irr} (G)$.

\section{Structure of the proof}  \label{sec:2}
From Section \ref{sec:1}, we recall that $L/\Q$ is
 a Galois extension of degree $d$ 
defined by the monic irreducible polynomial $F(X)$ in
\eqref{eq:1.1} with integer coefficients.
For short, we have set $G = \gal (L/\Q)$.

Let $\mathrm{RS}(F,\Q)$ be the $d$-dimensional $\Q$-vector space of 
rational recurrent sequences whose characteristic polynomial is equal to $F (X)$.
In this section, we analyze the proof of 
the existence of the sequences satisfying \eqref{eq:1.2} by Rosen, Takeyama, 
Tasaka and Yamamoto in \cite{MR4755038} from a constructive point of view.
 In their paper \cite{MR4755038}, they use a
map from $\mathrm{RS} (F,\Q)$ to so-called the ring of finite algebraic numbers.
This map eliminates the ambiguity of a finite number of exceptional primes, but 
does not relate to the construction of the sequences \eqref{eq:1.2}.
Thus we do not use the ring in the following construction and 
will not mention it further in the later sections.

We define the sequences $\mathbf{e}_i = (e_i^{(j)})_{j \ge 0} \in \mathrm{RS} (F,\Q ) \ (i=0,\ldots ,d-1) $ to satisfy
 $e_{i}^{(j)} = \delta_{ij}$ for $0 \le j \le d-1$.
The ordered 
set $(\mathbf{e}_i)_{0 \le i \le d-1}$ is a $\Q$-basis of $\mathrm{RS} (F,\Q )$.
We consider the linear transformation 
\[
 \nu : (a_0,a_1, \ldots ) 
\mapsto (a_1, a_2, \ldots )
\]
on $\mathrm{RS} (F,\Q)$. The matrix 
representation of the map $\nu$ with respect to the above basis is given by 
the companion matrix of $F(X)$:
\[
   R_F = \begin{bmatrix}
	  0 & 1 & 0 & \cdots  & 0 \\
	  0 & 0 & 1 & &  \\
	  \vdots &\vdots & &\ddots & \vdots \\
         0	& 0 & & & 1 \\
	 -c_0 & -c_1 & \cdots & & -c_{d-1}
	 \end{bmatrix}.
\]
Since $F(X)$ splits in $L$ and is separable, the matrix $R_F$ is diagonalizable
by 
\begin{equation} \label{eq:2.0}
 P=[\sigma^{-1} \xi^{i-1}]_{ 1 \le i \le d , \sigma \in G} 
\in \mathrm{Mat}_{d \times d} (L ) 
\end{equation}
and we indeed obtain 
\begin{equation} \label{eq:2.0.5}
 P^{-1} R_F P = \mathrm{diag} [\sigma^{-1}\xi]_{\sigma \in G}.
\end{equation}
This also shows that 
$\mathrm{RS} (F,L ) =L \otimes_{\Q}
\mathrm{RS} (F,\Q )$ has an $L$-basis consisting of  
the geometric series 
 $x_{\sigma^{-1} }= ((\sigma^{-1} \xi)^i)_{i \ge 0}$.

\begin{lemma} \label{lem:2.1}
These two bases of $\mathrm{RS} (F,L )$ relate by
\[
 (\mathbf{e}_0, \ldots , \mathbf{e}_{d-1}) = (x_{\sigma^{-1}})_{\sigma \in G} P^{-1}
\]
with $P$ as in \eqref{eq:2.0} and the matrix representation of $\nu$
with respect to the basis $(x_{\sigma^{-1}})_{\sigma \in G}$ is given by 
\[
 (\nu (x_{\sigma^{-1}}))_{\sigma \in G} = (x_{\sigma^{-1}})_{\sigma \in G} \,
\mathrm{diag}[\sigma^{-1} \xi ] _{\sigma \in G}.
\]
\end{lemma}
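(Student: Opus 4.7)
The plan is to verify both claims by computing only the first $d$ terms, since a sequence in $\mathrm{RS}(F,L)$ is determined by its first $d$ entries. In particular, for the first identity I would rewrite it in the equivalent form $(x_{\sigma^{-1}})_{\sigma \in G} = (\mathbf{e}_0, \ldots, \mathbf{e}_{d-1})\, P$ and check it entry-by-entry, which avoids any a priori manipulation of $P^{-1}$.

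For this reformulated first identity, I would compute the $j$-th term ($0 \le j \le d-1$) of the right-hand side component corresponding to $\sigma$: since $e_i^{(j)} = \delta_{ij}$, it equals $P_{j+1,\sigma} = \sigma^{-1}\xi^{j} = (\sigma^{-1}\xi)^{j}$, the last equality being that $\sigma^{-1}$ is a field automorphism. By definition, the $j$-th term of $x_{\sigma^{-1}}$ is $(\sigma^{-1}\xi)^{j}$, so the two sequences agree on their first $d$ terms and hence coincide in $\mathrm{RS}(F,L)$. Inverting $P$ — which is non-singular by \eqref{eq:2.0.5} since the $\sigma^{-1}\xi$ are pairwise distinct roots of the separable polynomial $F$ — yields the stated form.

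For the second identity, I would use that each $x_{\sigma^{-1}}$ is a geometric sequence and is therefore an eigenvector of the shift: $\nu(x_{\sigma^{-1}}) = ((\sigma^{-1}\xi)^{i+1})_{i \ge 0} = (\sigma^{-1}\xi)\, x_{\sigma^{-1}}$. Packaging these eigenvector identities as the columns of a single matrix equation immediately gives the claimed diagonal form. The only real subtlety in the whole proof is bookkeeping: keeping row-vs-column conventions and zero-vs-one indexing consistent between the two bases; beyond that, everything is direct from the definitions of $\mathbf{e}_i$, $x_{\sigma^{-1}}$, and $P$.
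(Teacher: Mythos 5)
Your proposal is correct and follows essentially the same route as the paper, which simply notes that $P$ is invertible and calls the rest obvious; you merely spell out the termwise verification and the eigenvector property of the geometric series, which is exactly the intended content. One small point: invoking \eqref{eq:2.0.5} for the non-singularity of $P$ is slightly circular (that identity already presumes $P^{-1}$ exists); the clean justification is that $P$ is a Vandermonde matrix in the $d$ pairwise distinct conjugates $\sigma^{-1}\xi$, which is in fact the reason you already give.
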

\begin{proof}
The definition of $P$ implies
\[
 (\mathbf{e}_0, \ldots , \mathbf{e}_{d-1}) P = (x_{\sigma^{-1}})_{\sigma \in G}.
\]
Since $P$ is a matrix of the change of the bases, it is invertible, and thus 
we have the first equality.

We consider a $\Q$-linear isomorphism
\[
 \Lambda : \mathrm{RS} (F,\Q ) \mapsto L, \quad 
\mathbf{e}_{i-1} \mapsto \xi^{i-1}.
\]
This map makes the following diagram commutative:
\[
 \xymatrix{
\mathrm{RS} (F,\mathbb{Q}) \ar[r]^{\ \ \Lambda} \ar[d]^{\nu} & L \ar[d]^{\times \xi} \\
\mathrm{RS} (F,\mathbb{Q}) \ar[r]^{\ \ \Lambda} & L
}
\]
where the right vertical map is the multiplication-by-$\xi$ map and the corresponding 
matrix with respect 
to the basis $(\xi^{i-1})_{1 \le i \le d}$ coincides with  $R_F$.
The first equality yields 
the commutative diagram of matrix multiplications:
\[
 \xymatrix{
L \otimes_{\Q} \mathrm{RS} (F,\mathbb{Q}) \ar[r]^{\ \ P^{-1}} \ar[d]^{1 \otimes R_F} & 
\mathrm{RS} (F,L)  \ar[d]^{\nu_L}   \\
L \otimes_{\Q} \mathrm{RS} (F,\Q ) \ar[r]^{\ \ P^{-1}} & \mathrm{RS}(F,L) .
}
\]
Therefore 
the right vertical induced map $\nu_L$  is represented by 
\[
 P^{-1} R_F P = 
\mathrm{diag} [\sigma^{-1} \xi]_{\sigma \in G}
\]
 with respect to the basis 
$(x_{\sigma^{-1}})_{\sigma \in G}$. This completes the proof of Lemma \ref{lem:2.1}.
\end{proof}

For a sequence $\mathbf{a}=(a_i)_{i \ge 0} \in \mathrm{RS} (F, L ) $, it follows from 
Lemma \ref{lem:2.1} that the coordinates of $\mathbf{a}$
with respect to $(x_{\sigma^{-1}})_{\sigma \in G}$ is given by
\begin{equation} \label{eq:co}
\begin{bmatrix}
 z_{\sigma}
\end{bmatrix}_{\sigma \in G}=
 P^{-1} \begin{bmatrix}
	 a_0 \\ \vdots \\ a_{d-1}
	\end{bmatrix} .
\end{equation}
This implies the equalities
\begin{align}
 & \mathbf{a}  = (\mathbf{e}_0, \ldots, \mathbf{e}_{d-1}) \begin{bmatrix}
						 a_0 \\ \vdots \\ a_{d-1}
						\end{bmatrix}
= (x_{\sigma^{-1}})_{\sigma \in G} \begin{bmatrix}
						 z_{\sigma}
						\end{bmatrix}_{\sigma \in G}
 \label{eq:2.2.7}\\ 
\intertext{and} 
& \begin{bmatrix}
	 a_n \\ \vdots \\ a_{n+d-1}
	\end{bmatrix}
= R_F^n \begin{bmatrix}
	 a_0 \\ \vdots \\ a_{d-1}
	\end{bmatrix} 
= P \mathrm{diag } [(\sigma^{-1}\xi)^n]  
\begin{bmatrix}
						 z_{\sigma}
						\end{bmatrix}_{\sigma \in G}. \label{eq:2.3}
\end{align}

Now let us define an $L$-linear map 
 $\Psi : \mathrm{RS} (F ,L)  \rightarrow L \otimes_{\Q } L$ 
by 
\begin{equation} \label{eq:2.2.5}
\Psi (\mathbf{a}) = \sum_{\sigma \in G} z_{\sigma} \otimes \sigma^{-1} \xi
\end{equation}
in accordance with \eqref{eq:co}.
The vector space $\mathrm{RS} (F ,L)$ has a natural termwise action of $G$ and
also $L \otimes_{\Q} L$ has an $L$-structure given by 
by $\eta \mapsto \eta \otimes 1 $
and has a diagonal $G$-action.

As in \cite[Proposition 2.5]{MR4755038}, we can prove the following lemma.

\begin{lemma} \label{lem:2.2}
The $L $-linear map $\Psi$ 
is a $G$-equivariant map.
\end{lemma}

Let $X(G, L )$ be 
the set of functions from $G$ to $L$.
Pointwise addition and multiplication endow $X(G,L)$ an $L$-algebra structure.
For $\varphi \in  X(G, L ) $ and $\tau \in G$, we define the left $G$-action 
on $X(G, L) $ by
\[
 ( \tau \varphi )( \sigma ) = \tau \varphi (\tau^{-1} \sigma \tau ).
\]

The following lemma can also be verified as in \cite[Section 2.3]{MR4755038}.

\begin{lemma} \label{lem:2.3}
 Let 
 $\Phi : L \otimes_{\Q} L \longrightarrow X (G , L)$ 
be the map defined by 
\[
\gamma \otimes \alpha  \mapsto (\tau \mapsto \gamma \tau \alpha).
\]
Then the map $\Phi$ induces a $G$-equivariant isomorphism 
\[
 L \otimes_{\Q} L \cong  X(G,L).
\]\
\end{lemma}

So far, we have $G$-equivariant homomorphisms
\begin{equation} \label{eq:2.5}
 \mathrm{RS} (F,L)  \overset{\Psi}{\longrightarrow}  L \otimes_{\Q } L 
\overset{\Phi}{\longrightarrow} X(G,L) .
\end{equation}
By taking the $G$-invariant part of the sequence, we have the following
sequence of $\Q$-linear maps as in \cite[(2.3)]{MR4755038}:
\begin{equation} \label{eq:2.8}
  \mathrm{RS} (F,\Q)  \overset{\Psi}{\longrightarrow}  (L \otimes_{\Q } L )^G
\overset{\Phi}{\longrightarrow} X(G,L)^G .
\end{equation}
Here by the definition of the $G$-action on $X(G,L)$, we see
\[
 X(G,L)^G = \{ \varphi \in X(G,L) \mid \tau \varphi (\sigma ) = 
\varphi (\tau \sigma \tau^{-1})\}.
\]

Let $p$ be a prime number and $\mathfrak{p}$ a prime ideal of $L$ lying above $p$.
If $p$ in unramified in $L$, then we denote by $\mathrm{Frob}_{\mathfrak{p}}$ the 
Frobenius automorphism of $\mathfrak{p}$.

Let $\phi_K \in X(G,\Q )$ be the characteristic 
function of 
a conjugacy class $K$ of $G$, that is, $\phi_K (\sigma) =1 $ if $\sigma \in K$,
and $\phi_K (\sigma)=0$ otherwise. Since $\phi_K$ is a class function, it belongs to 
$X(G,L)^G$.
If there is a sequence $\mathbf{a} \in \mathrm{RS} (F,\Q)$ 
satisfying $\Phi \circ \Psi (\mathbf{a})=
\phi_K $,
 then we have 
\begin{align*}
 \phi_K (\mathrm{Frob}_{\mathfrak{p}}) & = \left( \Phi \circ \Psi (\mathbf{a})\right) 
(\mathrm{Frob}_{\mathfrak{p}} ) \\
&=  \Phi \left( \sum_{\sigma \in G} z_{\sigma} \otimes \sigma^{-1} \xi \right)(\mathrm{Frob}_{\mathfrak{p}} ) && \text{ by } \eqref{eq:2.2.5}\\
&  = \sum_{\sigma \in G} z_{\sigma} \otimes \mathrm{Frob}_{\mathfrak{p}} ( \sigma^{-1} \xi) && \text{ by }  \text{Lemma } \ref{lem:2.3} \\ 
& \equiv \sum_{\sigma \in G} z_{\sigma} \otimes (\sigma^{-1} \xi )^p
   \pmod{\mathfrak{p}} \\
 & = \begin{bmatrix}
      1 & 1 & \cdots & 1
     \end{bmatrix} 
\mathrm{diag} [(\sigma^{-1}\xi)^p]
 \begin{bmatrix}
	z_{\sigma} 
\end{bmatrix}_{\sigma \in G} \\
\intertext{since $\begin{bmatrix}
      1 & 1 & \cdots & 1
     \end{bmatrix}$ is the first row of $P$, }
& = \text{the first element of }  P \mathrm{diag} [(\sigma^{-1}\xi)^p] P^{-1} \begin{bmatrix}
							       a_0 \\ \vdots \\
a_{d-1}       \end{bmatrix} && \text{by } \eqref{eq:co} \\
 &=\text{the first element of } R_F^{p}  \begin{bmatrix} 					       a_0 \\ \vdots \\
a_{d-1}       \end{bmatrix} \\
& \equiv  a_p \pmod{p} && \text{by } \eqref{eq:2.3} 
\end{align*}
and therefore, the sequence $\mathbf{a}$ satisfies \eqref{eq:1.2}.

The authors of \cite{MR4755038} constructed
such characteristic functions
as the idempotents corresponding 
to the decomposition $X (G,L)^G \cong \prod_{j=1}^r
L^{Z_G(\rho_j)}$ (see \cite[Proposition 2.11 and (3.2)]{MR4755038}). 

We instead construct explicit idempotents in $X(G, \Q_{\chi})$ where 
$\Q_{\chi}$ is an abelian extension of $\Q$ we will soon define.
Our strategy is as follows.
Such a function $\phi_K$ is obviously a class function and thus contained 
in the center $Y(G,\Q)$ of $X(G,\Q)$ with respect to the convolution product.
Let $\Q_{\chi}$ be the field extension of $\Q$ generated by
the values of the irreducible characters of $G$.
The representation theory of finite groups tells us that 
the function $\phi_K$ can be described explicitly, 
if we work on $\Q_{\chi}$ instead of $\Q$. 
Finally, we shall show that the inverse image $\mathbf{a}$ of $\phi_K$ exists.
This strategy will be accomplished in the next section.

\section{Proof of Theorem \ref{thm:1.1}} \label{sec:3}
We recall from Section \ref{sec:2} that $Y(G,\Q_{\chi})$ is the subalgebra 
consisting
of the class functions on $G$
with values in $\Q_{\chi}$.

In this section, we first give standard class functions in $Y(G,\Q_{\chi})$
that are essentially the characteristic functions on the  conjugacy classes of $G$.
By changing bases of $L \otimes_{\Q} Y (G,\Q_{\chi})$, we shall prove
Theorem \ref{thm:1.1} at the end of this section.

The following lemma is well known.

\begin{lemma}[\protect{\cite[XVIII Theorem 4.9]{MR1878556}}]
 The subalgebra $Y (G, \Q) $ is a natural dual of the center $Z (\Q [G])$.
\end{lemma}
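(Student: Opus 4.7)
The plan is to realize the duality as a restriction of the canonical evaluation pairing between functions on $G$ and the group algebra. First I would introduce the pairing $B \colon X(G,\Q) \times \Q[G] \to \Q$ defined by
\[
B\Bigl(f,\, \sum_{\sigma \in G} a_\sigma \sigma \Bigr) = \sum_{\sigma \in G} a_\sigma f(\sigma).
\]
With respect to the standard bases $\{\delta_\sigma\}_{\sigma \in G}$ of $X(G,\Q)$ and $\{\sigma\}_{\sigma \in G}$ of $\Q[G]$, these bases are dual to each other under $B$, so the pairing is non-degenerate and identifies $X(G,\Q)$ with $\Q[G]^{*}$.

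Next, I would let $G$ act on both sides by conjugation: on $X(G,\Q)$ via $(\tau\cdot f)(\sigma) = f(\tau^{-1}\sigma\tau)$ as in Section~\ref{sec:2}, and on $\Q[G]$ via $\tau\cdot x = \tau x \tau^{-1}$. A change of summation variable $\sigma \mapsto \tau^{-1}\sigma\tau$ shows $B(\tau\cdot f,\, \tau\cdot x) = B(f,x)$, so $B$ is $G$-invariant. The $G$-invariant subspaces on the two sides are exactly $Y(G,\Q)$ (class functions) and $Z(\Q[G])$ (central elements of the group algebra), so $B$ restricts to a bilinear pairing $Y(G,\Q) \times Z(\Q[G]) \to \Q$.

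Finally, I would verify perfectness by exhibiting nearly-dual bases. Both $Y(G,\Q)$ and $Z(\Q[G])$ have dimension $r$, the number of conjugacy classes; the characteristic functions $\phi_{K_j}$ furnish a $\Q$-basis of the first, while the class sums $\hat{K}_j = \sum_{\sigma \in K_j}\sigma$ furnish a $\Q$-basis of the second. The direct computation $B(\phi_{K_i}, \hat{K}_j) = |K_i|\,\delta_{ij}$ shows that the Gram matrix of the restricted pairing is diagonal with nonzero entries, hence the pairing is perfect and yields the asserted natural duality. The proof is essentially bookkeeping once the pairing is fixed, so I do not anticipate a serious obstacle; the only subtlety is making precise what "natural" means, which is captured here by the $G$-equivariance of $B$ with respect to the conjugation actions and by the coincidence of the evaluation pairing on the standard bases with the pairing $|K_i|\delta_{ij}$ on the distinguished bases.
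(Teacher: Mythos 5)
Your proof is correct and follows the same route the paper indicates: the paper simply cites Lang and remarks that the duality comes from the evaluation pairing $(f,\sigma)=f(\sigma)$, which is exactly the pairing $B$ you construct, restrict to class functions and the center, and verify to be perfect via the bases $\phi_{K_i}$ and $\hat{K}_j$. The only difference is that you spell out the $G$-equivariance and the Gram matrix computation explicitly, which the paper leaves to the reference.
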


The duality follows from the bilinear map $(f, \sigma ) = f (\sigma )$.
It is also well known that the conjugacy sums
\[
 k_j = \sum_{\sigma \in K_j } \sigma \in \Q [G] \quad (j=1,\ldots , r)
\]
form a basis of $Z (\Q [G])$ (see \cite[XVIII Proposition 4.2]{MR1878556}).

\begin{lemma} \label{lem:3.2}
Let 
\[
 k_j^* = \frac{1}{|Z_G  (\rho_j )|} \sum_{\psi \in \mathrm{Irr} (G)} 
\psi (\rho_j^{-1}) \psi \in Y (G, \Q_{\chi}).
\]
Then $k_j^*$ is the characteristic function of $K_j$, namely 
$k_j^* (\rho_i)= \delta_{ji}$ holds.
\end{lemma}
\begin{proof}
By definition, we have
\[
 k_j^* (\rho_i) = \frac{1}{|Z_G  (\rho_j )|} \sum_{\psi \in \mathrm{Irr} (G)} 
\psi (\rho_j^{-1}) \psi (\rho_i).
\]
The second orthogonality relation (\cite[Theorem 2.18]{MR0460423}) implies 
that the sum on the right hand side is equal to $0$ if $i \ne j$ and to 
$|Z_G (\rho_j)|$ otherwise.
\end{proof}

By the above proof, the equality 
$k_j^* (k_i) = |K_j| \delta_{ji}$ holds and this implies that
\[
 \frac{k_j^*}{|K_j|} = \frac{1}{|G|} \sum_{\psi \in \mathrm{Irr} (G)} 
\psi (\rho_j^{-1}) \psi  \quad   (j=1,\ldots , r)
\]
forms a dual basis of $(k_j)_{1 \le j \le r}$.

For later use, we define the matrix $C $ by
\[
 C= [\psi (\rho_j^{-1})]_{\psi \in \mathrm{Irr} (G), 1 \le j \le r}
\in \mathrm{Mat}_{r\times r} (\Q_{\chi})
\]
and then the identity
\begin{equation} \label{eq:3.1}
 \left( |Z_G (\rho_j)| \, k_j^* \right)_{1 \le j \le r} = (\psi )_{\psi \in \mathrm{Irr} (G)} C
\end{equation}
 holds by Lemma \ref{lem:3.2}.
The matrix $C$ is almost equal to the character table of $G$.

For $\alpha \in L$,
let $\varphi_{\alpha } \in X(G,L)$ be the map sending $\sigma \in G$ to 
$ \sigma \alpha $, which is the image of $1 \otimes \alpha $ by $\Phi $ in Lemma \ref{lem:2.3}. 
Another standard basis $\mathrm{Irr} (G)$ of $Y (G,\Q_{\chi})$ 
can be expressed as linear combinations of 
$\varphi_{\sigma^{-1} \xi} \, (\sigma \in G)$ in $X (G,L)$ with coefficients
in $L$.

\begin{lemma} \label{lem:3.3}
Let $\kappa \in \mathrm{Mat}_{d \times r} (\Q)$ be as defined 
in the statement of Theorem \ref{thm:1.1}.
The following assertions hold.
\begin{enumerate}
 \item \label{3.3.1} The set $(\varphi_{\sigma^{-1} \xi })_{\sigma \in G}$ forms a basis of $X(G,L)$.
 \item \label{3.3.2}
If we set $\Gamma_{\xi} = [\sigma \tau^{-1} \xi ]_{\sigma \in G, \tau \in G} \in \mathrm{Mat}_{d \times d}
 (L)$, then we have an equality
\[
 (\psi)_{\psi \in \mathrm{Irr} (G)} = (\varphi_{\sigma^{-1} \xi })_{\sigma \in G} \Gamma_{{\xi}}^{-1} \kappa C'
\]
with $C'=[\psi (\rho_i)]_{1\le i \le r, \psi \in \mathrm{Irr}(G)}$.
\end{enumerate}
\end{lemma}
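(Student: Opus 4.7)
The plan is to treat the two assertions separately: (i) is an immediate consequence of the normal basis assumption combined with the isomorphism $\Phi$ of Lemma \ref{lem:2.3}, while (ii) is a linear algebra computation that rests on (i) together with the fact that irreducible characters are class functions.

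For (i), the isomorphism $\Phi : L \otimes_{\Q} L \xrightarrow{\sim} X(G,L)$ of Lemma \ref{lem:2.3} sends $1 \otimes \alpha$ to $\varphi_\alpha$. The standing normal basis assumption on $\xi$ says exactly that $(\sigma^{-1}\xi)_{\sigma \in G}$ is a $\Q$-basis of $L$, hence $(1 \otimes \sigma^{-1}\xi)_{\sigma \in G}$ is an $L$-basis of $L \otimes_{\Q} L$ via the second factor; applying $\Phi$ delivers the required $L$-basis $(\varphi_{\sigma^{-1}\xi})_{\sigma \in G}$ of $X(G,L)$. As a byproduct, $\Gamma_{\xi}$ is invertible over $L$: its $(\sigma,\tau)$-entry $\sigma\tau^{-1}\xi$ is precisely $\varphi_{\tau^{-1}\xi}(\sigma)$, so $\Gamma_{\xi}$ is the matrix expressing the basis $(\varphi_{\tau^{-1}\xi})_{\tau}$ in the tautological evaluation basis of $X(G,L)$.

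For (ii), expand each irreducible character in the basis from (i):
\[
\psi = \sum_{\tau \in G} a_{\tau,\psi}\,\varphi_{\tau^{-1}\xi}, \qquad a_{\tau,\psi} \in L,
\]
and let $A = [a_{\tau,\psi}]_{\tau \in G,\, \psi \in \mathrm{Irr}(G)}$. Evaluating at $\sigma \in G$ gives $\psi(\sigma) = \sum_{\tau} a_{\tau,\psi}\,\sigma\tau^{-1}\xi$, i.e.\ in matrix form $[\psi(\sigma)]_{\sigma,\psi} = \Gamma_{\xi}\, A$. Since $\psi$ is a class function,
\[
\psi(\sigma) \;=\; \sum_{i=1}^{r} \kappa_{\sigma,i}\,\psi(\rho_i),
\]
and hence the matrix on the left factors as $\kappa C'$. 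Therefore $A = \Gamma_{\xi}^{-1}\kappa C'$, and substituting into the row-vector identity $(\psi)_{\psi \in \mathrm{Irr}(G)} = (\varphi_{\sigma^{-1}\xi})_{\sigma \in G}\, A$ yields the desired formula.

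The genuinely non-trivial content is concentrated in (i); from there, the only real obstacle in (ii) is bookkeeping, namely ensuring that the row/column index conventions of $\Gamma_\xi$, $\kappa$, and $C'$ compose consistently so that the factorization $[\psi(\sigma)]_{\sigma,\psi} = \kappa C'$ is correct on the nose. No representation-theoretic input is required beyond the class-function property of characters in $\mathrm{Irr}(G)$.
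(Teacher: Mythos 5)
Your proof is correct and takes essentially the same route as the paper: your part (ii) is the paper's argument (expand the characters in the basis $(\varphi_{\tau^{-1}\xi})_{\tau}$, evaluate on $G$, and use the class-function property to identify the evaluation matrix $[\psi(\sigma)]_{\sigma,\psi}$ with $\kappa C'$ and the coefficient relation with $\Gamma_{\xi}A$), while your part (i) simply spells out, via $\Phi$ and the normal basis assumption, what the paper declares ``clear.''
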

\begin{proof}
The first assertion \ref{3.3.1} is clear.
For \ref{3.3.2},
let $M$ be a matrix satisfying 
$ (\psi)_{\psi \in \mathrm{Irr} (G)} = (\varphi_{\sigma^{-1} \xi })_{\sigma \in G} M$. By noting that the resulted function is a class function, 
it is readily seen that 
\[
\kappa C' = [ \varphi_{\sigma^{-1} \xi} (\tau )]_{ \tau \in G, \sigma \in G} 
\, M = \Gamma_{\xi} \, M.
\]
From this, the lemma follows immediately.
\end{proof}

Combining the above lemmas, we can prove the following lemma.

\begin{lemma} \label{lem:3.4}
 We have 
\[
 (k_j^*)_{1 \le j \le r} = (\varphi_{\sigma^{-1}\xi})_{\sigma \in G} \Gamma_{\xi}^{-1} \kappa .
\]
\end{lemma}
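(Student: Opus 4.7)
The plan is to derive the identity by composing the two matrix identities already established: the expression for the dual basis $(k_j^*)$ in terms of the irreducible characters (Lemma \ref{lem:3.2}, in the form \eqref{eq:3.1}) and the expression for the irreducible characters in terms of the functions $\varphi_{\sigma^{-1}\xi}$ (Lemma \ref{lem:3.3}\ref{3.3.2}). Since the former multiplies $(k_j^*)$ on the right by $\mathrm{diag}(|Z_G(\rho_j)|)$ times the character matrix $C$, and the latter writes $(\psi)_\psi$ as $(\varphi_{\sigma^{-1}\xi})_\sigma \Gamma_\xi^{-1}\kappa C'$, substituting one into the other leaves us having to simplify the product $C'C$.

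Concretely, I would substitute Lemma \ref{lem:3.3}\ref{3.3.2} into \eqref{eq:3.1} to get
\[
\bigl(|Z_G(\rho_j)|\, k_j^*\bigr)_{1\le j\le r} = (\varphi_{\sigma^{-1}\xi})_{\sigma \in G}\, \Gamma_\xi^{-1}\, \kappa\, C'C,
\]
and then identify $C'C$. Writing out the $(i,j)$-entry gives $\sum_{\psi \in \mathrm{Irr}(G)} \psi(\rho_i)\psi(\rho_j^{-1})$, which by the column orthogonality relation of irreducible characters (using $\psi(\rho_j^{-1}) = \overline{\psi(\rho_j)}$) equals $|Z_G(\rho_j)|\,\delta_{ij}$. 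Hence $C'C = \mathrm{diag}\bigl(|Z_G(\rho_j)|\bigr)_{1 \le j \le r}$, and cancelling this diagonal factor on the right of both sides of the displayed equation yields the claim.

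There is no substantive obstacle; the argument is purely a bookkeeping exercise. The only point requiring care is keeping straight which index labels rows versus columns in $C$ and $C'$, since $C$ is indexed by $(\psi, j)$ and $C'$ by $(i, \psi)$, so that their product $C'C$ is naturally indexed by $(i,j)$ with the summation over $\psi$. Once this is tracked correctly, the column orthogonality relation delivers the diagonal matrix and the lemma follows in a single line.
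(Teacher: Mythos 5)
Your proposal is correct and follows essentially the same route as the paper: substitute Lemma \ref{lem:3.3}\ref{3.3.2} into \eqref{eq:3.1}, identify $C'C$ as $\mathrm{diag}\bigl(|Z_G(\rho_j)|\bigr)$ via the second orthogonality relation, and cancel the diagonal factor. No gaps to report.
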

\begin{proof}
 Substituting the formula from Lemma \ref{lem:3.3} into \eqref{eq:3.1} yields
\[
 \left( |Z_G (\rho_j)| \, k_j^* \right)_{1 \le j \le r} = (\varphi_{\sigma^{-1}\xi})_{\sigma \in G} \Gamma_{\xi}^{-1} \kappa C' C.
\]
The $(i,j)$-entry of $C'C$ is
\[
 \sum_{\psi \in \mathrm{Irr} (G)} \psi (\rho_i) \psi (\rho_j^{-1}) = \begin{cases}
								      |Z_G(\rho_j)| & \text{ if } i=j , \\
								      0 & \text{ otherwise}
								     \end{cases}
\]
by the second orthogonality relation.
From this, the equality in the lemma follows.
\end{proof}

We are now ready to prove Theorem \ref{thm:1.1}.

\begin{proof}[Proof of Theorem \ref{thm:1.1}]
 Let $\mathbf{a}_{K_j}=(a_{K_j,i})_{ i \ge 0} \in \mathrm{RS} (F,L)$ be the sequence satisfying 
\eqref{eq:1.2} for the conjugacy class $K_j$.
They must be chosen so that $\Phi \circ \Psi (\mathbf{a}_{K_j}) = k_j^{*}$ 
holds.
If we collect the initial $d$ terms together in a matrix
 $A=[a_{K_j,i}]_{1 \le i \le d, 1 \le j \le r}$, then it follows from 
\eqref{eq:co}, \eqref{eq:2.2.5}, and Lemma \ref{lem:3.4} that
\[
 (\varphi_{\sigma^{-1} \xi})_{\sigma \in G} P^{-1} A = 
(k_j^*)_{1 \le j \le r} =  (\varphi_{\sigma^{-1} \xi})_{\sigma \in G} 
\Gamma_{\xi}^{-1} \kappa .
\]
Since $ (\varphi_{\sigma^{-1} \xi})_{\sigma \in G}$ is a basis of $X (G,L)$
by Lemma \ref{lem:3.3}, 
we conclude $A=P\, \Gamma_{\xi}^{-1} \kappa $.
 \end{proof}

Here we note that the result of Theorem \ref{thm:1.1} does not 
have a mark of the base change $\Q$ to $\Q_{\chi}$.

\section{Proof of Theorem \ref{thm:1.2}} \label{sec:4}
In this section, we compute the inverse of the matrix $\Gamma_{\xi}$ defined 
in Lemma \ref{lem:3.3} and 
prove the formulas in Theorem \ref{thm:1.2}.

We begin with the definition of the classical object from the origin of 
representation theory.

\begin{definition} \label{def:4.1}
 Let $\{ X_\sigma \}$ be variables indexed by $G$.
We call
\[
 \Gamma = \Gamma (G) = [X_{\sigma \tau^{-1}}]_{\sigma \in G, \tau \in G}
\]
the \emph{group matrix} of $G$.
The determinant of $\Gamma $ is called the \emph{group determinant} of $G$.
\end{definition}

The factorization of the group determinant was questioned by Dedekind
and answered by Frobenius. For this historical development, there is an 
excellent survey \cite{MR1659232} by Keith Conrad.

\begin{proposition}[\protect{\cite[Section 5]{MR1659232}}] \label{prop:4.1}
For each $\psi \in \mathrm{Irr} (G)$, let 
\begin{equation} \label{eq:4.1}
 \Delta (\psi ) 
= \det \left[
	\sum_{\sigma \in G} X_{\sigma } \Lambda_{\psi} (\sigma)
       \right].
\end{equation}
We then have a factorization of $\det \Gamma $ into irreducible polynomials:
\[
 \det \Gamma = \prod_{\psi \in \mathrm{Irr} (G)}  \Delta (\psi )^{\psi (1)} .
\]
 \end{proposition}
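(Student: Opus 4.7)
The plan is to interpret the group matrix $\Gamma$ as the matrix of left multiplication by a generic element of the group algebra on the regular representation, and then to use the Wedderburn decomposition to produce the factorization. Set $\zeta = \sum_{\sigma \in G} X_\sigma \sigma$ in $\C[X_\sigma : \sigma \in G][G]$. A direct calculation shows that the matrix of left multiplication $L_\zeta$ on $\C[G]$ with respect to the basis $G$ has $(\rho,\tau)$-entry equal to $X_{\rho\tau^{-1}}$, so $\Gamma$ is exactly this matrix and $\det \Gamma = \det L_\zeta$ as polynomials in the $X_\sigma$.

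Next I would decompose the regular representation $\C[G] \cong \bigoplus_{\psi \in \irr(G)} V_\psi^{\oplus \psi(1)}$, where $V_\psi$ affords $\Lambda_\psi$. With respect to an adapted basis, $L_\zeta$ becomes block diagonal with $\psi(1)$ identical blocks equal to $\Lambda_\psi(\zeta) = \sum_{\sigma \in G} X_\sigma \Lambda_\psi(\sigma)$ for each $\psi \in \irr(G)$. Taking determinants yields
\[
\det \Gamma = c \prod_{\psi \in \irr(G)} \Delta(\psi)^{\psi(1)}
\]
for some nonzero constant $c \in \C$. Specializing all variables to $0$ except the one indexed by the identity element of $G$, which is set to $1$, makes both sides evaluate to $1$, forcing $c = 1$.

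For the irreducibility of each factor, the Wedderburn isomorphism $\C[G] \cong \bigoplus_{\psi \in \irr(G)} M_{\psi(1)}(\C)$, or equivalently Burnside's density theorem applied to the $\Lambda_\psi$, shows that the $\sum_{\psi} \psi(1)^2 = |G|$ matrix entries of the collection $(\Lambda_\psi(\zeta))_{\psi \in \irr(G)}$ are linearly independent linear forms in the $X_\sigma$. Hence for each $\psi$ the polynomial $\Delta(\psi)$ is obtained from the generic $\psi(1) \times \psi(1)$ determinant by an invertible linear change of variables, and so is irreducible by the classical irreducibility of the generic determinant. Moreover, different $\Delta(\psi)$ involve disjoint families of these linear forms after the change of variables, so they are pairwise non-associate, which upgrades the numerical identity above to a factorization into irreducibles.

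The main obstacle is the irreducibility claim, which depends on the classical theorem that the generic $n \times n$ determinant is an irreducible polynomial in its $n^2$ entries; once this and the Wedderburn/Burnside linear independence of the matrix entries are in place, the rest of the argument is essentially bookkeeping with the block decomposition of the regular representation.
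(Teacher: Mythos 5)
Your argument is correct and complete: identifying $\Gamma$ with the matrix of left multiplication by the generic element $\sum_\sigma X_\sigma\sigma$ on $\C[G]$, splitting the regular representation, and invoking the irreducibility of the generic determinant (via the surjectivity of $\C[G]\to M_{\psi(1)}(\C)$) is exactly the standard modern proof. The paper itself offers no proof, quoting the result from Conrad's survey, and your argument coincides with the one given there, so there is nothing to add.
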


A Jacobi-type formula for an inverse matrix implies the following lemma.

\begin{lemma}[\protect{\cite[Lemma 1]{MR1659232}}] \label{lem:4.3}
If we define 
\[
 \partial_{\sigma } = \frac{1}{|G|} 
\frac{\partial \det \Gamma }{\partial X_{\sigma}},
\]
then the $(\sigma, \tau )$-entry of the adjugate matrix of $\Gamma $  is
given by $ \partial_{\tau \sigma^{-1}}$.
Namely, we have $(\det \Gamma ) \, E_{|G|} = \Gamma \, 
[\partial_{\tau \sigma^{-1}}]_{\sigma \in G, \tau \in G }$, where $E_{|G|}$ is the identity matrix of size $|G|$.
\end{lemma}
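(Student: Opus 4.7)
The plan is to exploit the right-translation symmetry of $\Gamma$ to constrain the shape of its adjugate, and then pin down the coefficients via the chain rule. The single structural input I need is that $\Gamma$ commutes with the right regular representation of $G$: if $R_{\gamma}$ denotes the permutation matrix with $(R_{\gamma})_{\sigma,\tau} = \delta_{\sigma,\tau\gamma}$, then a one-line computation from $(\sigma\gamma)(\tau\gamma)^{-1} = \sigma\tau^{-1}$ shows that $\Gamma R_{\gamma} = R_{\gamma} \Gamma$ for every $\gamma \in G$.

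Working over the field $\Q(X_{\sigma} : \sigma \in G)$, the adjugate equals $\det(\Gamma)\,\Gamma^{-1}$ and therefore also commutes with every $R_{\gamma}$. I will then invoke the routine observation that any matrix $M = [M_{\sigma,\tau}]_{\sigma,\tau \in G}$ commuting with all $R_{\gamma}$ is itself a group matrix: setting $Z_{\rho} := M_{\rho, e}$ and evaluating the commutation at the identity column forces $M_{\sigma,\tau} = Z_{\sigma\tau^{-1}}$. Applied to $M = \mathrm{adj}(\Gamma)$, this yields $\mathrm{adj}(\Gamma)_{\sigma,\tau} = Z_{\sigma\tau^{-1}}$ for a uniquely determined function $Z \colon G \to \Q(X_{\sigma} : \sigma \in G)$.

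To identify $Z$, I apply the chain rule. The variable $X_{\rho}$ appears in $\Gamma$ precisely at the $|G|$ positions $(\rho\tau, \tau)$, each with multiplicity one. By the Jacobi formula, the cofactor at $(\rho\tau, \tau)$ equals $\mathrm{adj}(\Gamma)_{\tau, \rho\tau} = Z_{\tau(\rho\tau)^{-1}} = Z_{\rho^{-1}}$, which is independent of $\tau$. Summing over the $|G|$ occurrences of $X_\rho$ therefore gives $\partial \det\Gamma / \partial X_{\rho} = |G| \cdot Z_{\rho^{-1}}$, so $Z_{\rho} = \partial_{\rho^{-1}}$, and substituting back produces $\mathrm{adj}(\Gamma)_{\sigma,\tau} = \partial_{(\sigma\tau^{-1})^{-1}} = \partial_{\tau\sigma^{-1}}$. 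The displayed matrix identity $\det\Gamma \cdot E_{|G|} = \Gamma \cdot [\partial_{\tau\sigma^{-1}}]$ is then just the defining relation $\Gamma \cdot \mathrm{adj}(\Gamma) = \det(\Gamma)\,E_{|G|}$.

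I expect the main obstacle to be bookkeeping: the conventions (left versus right regular action, the direction of inversion on group elements, and the transpose built into the cofactor/adjugate identification) must all be tracked consistently, since a single sign-of-inversion error lands one on $\partial_{\sigma\tau^{-1}}$ in place of the claimed $\partial_{\tau\sigma^{-1}}$. Once the right-regular commutation of $\Gamma$ is in hand, however, every subsequent step is essentially forced.
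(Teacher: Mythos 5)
Your argument is correct and complete: the commutation of $\Gamma$ with the right regular representation does force $\mathrm{adj}(\Gamma)$ to be a group matrix, and the chain-rule count of the $|G|$ occurrences of $X_\rho$ at the positions $(\rho\tau,\tau)$ correctly identifies $Z_\rho=\partial_{\rho^{-1}}$, hence $\mathrm{adj}(\Gamma)_{\sigma,\tau}=\partial_{\tau\sigma^{-1}}$. The paper does not prove this lemma at all but simply cites Conrad's Lemma~1, and your proof is essentially the standard argument given there, with the transpose in the cofactor--adjugate identification and the inversion $\sigma\tau^{-1}\mapsto\tau\sigma^{-1}$ both handled correctly.
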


\begin{lemma} \label{lem:4.4}
 If we define the left action of $\gamma \in G$ on $X_{\tau }$ by
$\gamma X_{\tau} = X_{\gamma \tau}$, then the action of $\gamma $ on
$\partial_{\tau}$ is given by 
\[
 \gamma \partial_{\tau } = (\mathrm{sgn} \, \gamma ) \, \partial_{\gamma \tau} .
\]
\end{lemma}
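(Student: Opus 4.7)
The plan is to deduce the identity from how $\det \Gamma$ transforms under the action on the variables. First I would translate the substitution $X_{\sigma}\mapsto X_{\gamma\sigma}$ into a matrix operation: since $\gamma X_{\sigma\tau^{-1}} = X_{\gamma\sigma\tau^{-1}}$ is the entry of $\Gamma$ in row $\gamma\sigma$ and column $\tau$, the action of $\gamma$ on $\Gamma$ permutes its rows according to $\sigma\mapsto\gamma\sigma$. Writing $P_{\gamma}$ for the corresponding permutation matrix, we have $\gamma\Gamma = P_{\gamma}\,\Gamma$ as matrices of polynomials.

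Taking determinants yields $\gamma(\det\Gamma) = \det(P_{\gamma})\cdot\det\Gamma = \mathrm{sgn}(\gamma)\cdot\det\Gamma$. The sign identification here is the one genuinely nontrivial point: the sign of left multiplication $\sigma\mapsto\gamma\sigma$ on $G$ agrees with $\mathrm{sgn}\,\gamma$ in the sense of Theorem \ref{thm:1.2} because, under the normal-basis bijection $\sigma\leftrightarrow\sigma\xi$ between $G$ and the roots of $F(X)$, the embedding $G\hookrightarrow S_{d}$ coming from the action on roots is exactly the regular representation of $G$. Next I would invoke the chain rule on the polynomial ring $\Q[X_{\sigma}]$: for any $f$, we have $\partial(\gamma f)/\partial X_{\tau} = \gamma\bigl(\partial f/\partial X_{\gamma^{-1}\tau}\bigr)$, which reduces monomial-by-monomial to the identity $\partial X_{\gamma\alpha}/\partial X_{\tau} = \delta_{\gamma\alpha,\tau} = \delta_{\alpha,\gamma^{-1}\tau}$ together with the Leibniz rule.

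Applying this chain-rule formula to $f = \det\Gamma$ and combining with the previous display gives
\[
\mathrm{sgn}(\gamma)\cdot|G|\,\partial_{\tau} \;=\; \frac{\partial(\gamma\det\Gamma)}{\partial X_{\tau}} \;=\; |G|\cdot\gamma\,\partial_{\gamma^{-1}\tau}.
\]
Dividing by $|G|$ and substituting $\tau \mapsto \gamma\tau$ produces the desired equality $\gamma\partial_{\tau} = \mathrm{sgn}(\gamma)\,\partial_{\gamma\tau}$. The main obstacle I anticipate is not the computation itself but ensuring that the sign arising from the row-permutation of $\Gamma$ really coincides with the $\mathrm{sgn}$ function on $G\subset S_{d}$ already in use in Theorem \ref{thm:1.2}; once this compatibility is recorded, everything else is formal.
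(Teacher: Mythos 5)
Your proof is correct and takes essentially the same route as the paper's: you realize the substitution $X_\sigma\mapsto X_{\gamma\sigma}$ as a row permutation, $\gamma\Gamma = P_\gamma\Gamma$, extract $\gamma\det\Gamma=\mathrm{sgn}(\gamma)\det\Gamma$ from $\det P_\gamma$, and transfer the derivative via the chain rule, exactly as in the paper. The only differences are presentational: you differentiate at $X_\tau$ and then substitute $\tau\mapsto\gamma\tau$ rather than differentiating at $X_{\gamma\tau}$ directly, and you make explicit both the chain-rule identity and the fact that the sign of left multiplication on $G$ agrees with $\mathrm{sgn}$ for $G\subset S_d$ via the regular action on the roots $\sigma\xi$ --- a point the paper passes over as ``plain to see.''
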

\begin{proof}
We first note that, if we define a permutation matrix $S$ by $(\gamma \sigma )_{\sigma \in G}
= (\sigma)_{\sigma \in G} S$, 
then
\[
 \gamma \Gamma = [X_{\gamma \sigma \tau^{-1}}]_{\sigma \in G, \tau \in G} 
= [X_{\sigma \tau^{-1}}]_{\sigma \in \gamma G, \tau \in G}
= S \Gamma 
\]
for $\gamma \in G$.
It is plain to see that $\det S = \mathrm{sgn} \, \gamma  $.
We now compute
\[
 \gamma \partial_{\tau}  = \frac{1}{|G|} 
\frac{\partial \det (\gamma \Gamma)}{\partial X_{\gamma \tau}} 
 = \frac{1}{|G|} 
\frac{\partial \det (S \Gamma)}{\partial X_{\gamma \tau}} 
 =\frac{\mathrm{sgn} \, \gamma }{|G|} 
\frac{\partial \det  \Gamma }{\partial X_{\gamma \tau}}
=(\mathrm{sgn}\, \gamma )  \partial_{\gamma \tau}
\]
as desired.
\end{proof}

As stated in Theorem \ref{thm:1.2}, 
we consider $\Gamma_{\xi}, \Delta (\psi )_{\xi},$ and $\partial_{\sigma} (\xi)$
  as $\Gamma , \Delta_{\psi },$ and $\partial_{\sigma }$, 
respectively evaluated at $\xi$.

We now prove Theorem \ref{thm:1.2}.

\begin{proof}[Proof of Theorem \ref{thm:1.2}]
First, we recall from the proof of Theorem \ref{thm:1.1}
\[
 A= [a_{K_j,i}]_{1 \le i \le d, 1 \le j \le r }
\]
and $P$ as in \eqref{eq:2.0} and $\kappa $ as in Theorem \ref{thm:1.1}.
Theorem \ref{thm:1.1} then states that $A=P \Gamma_{\xi}^{-1} \kappa$.
It follows from Lemma \ref{lem:4.3} that the adjugate matrix 
$\widetilde{\Gamma}_{\xi}$ of $\Gamma_{\xi}$
equals to $[\partial_{\tau \sigma^{-1}} (\xi)]_{\sigma \in G, \tau \in G}$ and thus we obtain
 $(\det \Gamma_{\xi} )\, A = P \widetilde{\Gamma}_{\xi} \kappa$.
If $i \ge 0$, then by \eqref{eq:2.3}, the $i$-th term $a_{K_j, i}$ is the $(1,j)$-entry of the matrix
\begin{multline*}
 R_F^i A =P \mathrm{diag} [ (\sigma^{-1} \xi )^i]_{\sigma \in G} P^{-1 } A
= P \mathrm{diag} [(\sigma^{-1} \xi )^i]_{\sigma \in G} P^{-1 } P \Gamma_{\xi}^{-1} \kappa \\
= \frac{1}{\det \Gamma_{\xi}} P \mathrm{diag} [(\sigma^{-1} \xi^i )]_{\sigma \in G} [\partial_{\tau \sigma^{-1}} (\xi)]_{\sigma \in G, \tau \in G} \, \kappa .
\end{multline*}
The $(1,j)$-entry of the above matrix is equal to
\begin{align*}
  a_{K_j,i} & = [\sigma^{-1}\xi^i]_{\sigma \in G} 
 \prescript{t}{}{\left[\sum_{\tau \in K_j} \partial_{\tau \sigma^{-1}} (\xi )\right]_{\sigma \in G}} \\
& = \frac{1}{\det \Gamma_{\xi}}  \sum_{\sigma \in G } \sigma^{-1} \xi^i  \sum_{\tau \in K_j} 
\partial_{\tau \sigma^{-1}} (\xi ) \\
&= \frac{1}{\det \Gamma_{\xi}} \sum_{\sigma \in G } \sigma^{-1} \xi^i  \sum_{\tau \in K_j} 
\partial_{\sigma^{-1} \tau } (\xi ) & & (\because \tau \mapsto \sigma^{-1} \tau \sigma ) \\
&= \frac{1}{\det \Gamma_{\xi}} \sum_{\sigma \in G } \sigma \xi^i  \sum_{\tau \in K_j} 
\partial_{\sigma\tau } (\xi ) & & (\because \sigma  \mapsto \sigma^{-1} ) 
\end{align*}
This shows that the formula holds for $i \ge 0$.

It remains to show the latter half of the theorem.
First we assume $G \not\subset A_d$ and compute
\begin{align*}
 \det \Gamma_{\xi} \,  a_{K_j,i} & = 
\sum_{\sigma \in G } \sigma \xi^i   \sum_{\tau \in K_j} 
 \partial_{ \sigma \tau } (\xi ) \\
& = \sum_{\sigma \in G } \mathrm{sgn} (\sigma ) \sigma \left ( \xi^i    \sum_{\tau \in K_j} 
 \partial_{  \tau } (\xi ) \right) && (\text{by Lemma } \ref{lem:4.4}) \\
&= \sum_{\sigma \in G^+} \sigma 
\left(\xi^i \sum_{\tau \in K_j}  \partial_{\tau } (\xi ) \right)  - \delta \sum_{\sigma \in G^+} \sigma 
\left( \xi^i \sum_{\tau \in K_j}  \partial_{\tau } (\xi ) \right)
&& (\because G= G^{+} \sqcup \delta G^{+}) \\ 
&= (\mathrm{Tr}_{G^{+}} -\delta \mathrm{Tr}_{G^{+}}) \left(  \xi^{i}  \sum_{\tau \in K_j}  \partial_{\tau} (\xi ) \right).
\end{align*}
The case $G \subset A_d$ also follows from a similar and simpler computation.
\end{proof}

While, in their paper \cite{MR4755038}, 
the rationality of the sequences is proved indirectly using \eqref{eq:2.8},
 we can prove the rationality directly from Theorem \ref{thm:1.2}.

\begin{corollary} \label{cor:4.5}
 For any $K_j$ and $i \ge 0$, the term $a_{K_j,i}$ is a rational number.
\end{corollary}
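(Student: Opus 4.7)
The plan is to prove Galois invariance of $a_{K_j,i}$ directly from the first formula of Theorem \ref{thm:1.2}, using only the observation that both numerator and denominator transform by the same sign $\mathrm{sgn}\,\gamma$ under the action of any $\gamma \in G = \gal(L/\Q)$, so the ratio is $G$-invariant. Since $a_{K_j,i}$ a priori lies in $L$, invariance under $G$ will force $a_{K_j,i} \in L^G = \Q$.

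First I would record the bridge that makes Lemma \ref{lem:4.4} applicable to the genuine Galois action on $L$. After the evaluation $X_\sigma \mapsto \sigma\xi$, the substitution $X_\sigma \mapsto X_{\gamma\sigma}$ considered in Lemma \ref{lem:4.4} corresponds exactly to applying $\gamma$ to the result, since $\gamma(\sigma\xi) = (\gamma\sigma)\xi$. Consequently one reads off from Lemma \ref{lem:4.4} the identity $\gamma\bigl(\partial_\tau(\xi)\bigr) = (\mathrm{sgn}\,\gamma)\,\partial_{\gamma\tau}(\xi)$ for each $\tau \in G$.

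Next I would apply $\gamma$ to the denominator. With $S$ the permutation matrix used in the proof of Lemma \ref{lem:4.4}, one has $\gamma\Gamma_\xi = S\Gamma_\xi$ and $\det S = \mathrm{sgn}\,\gamma$, hence $\gamma(\det \Gamma_\xi) = (\mathrm{sgn}\,\gamma)\det\Gamma_\xi$. Then I would apply $\gamma$ to the numerator $N_j := \sum_{\sigma\in G,\,\tau\in K_j}\sigma\xi^i\,\partial_{\sigma\tau}(\xi)$: term-by-term the action yields $(\gamma\sigma)\xi^i \cdot (\mathrm{sgn}\,\gamma)\,\partial_{\gamma\sigma\tau}(\xi)$, and the reindexing $\sigma' = \gamma\sigma$ (a bijection of $G$) gives $\gamma(N_j) = (\mathrm{sgn}\,\gamma)N_j$. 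The two signs cancel in $N_j/\det\Gamma_\xi$, so $\gamma(a_{K_j,i}) = a_{K_j,i}$ for every $\gamma\in G$, and rationality follows.

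The only delicate point is the first one: carefully identifying the formal substitution action on the polynomial ring in the $X_\sigma$ with the genuine Galois action on $L$ after evaluation. Once that dictionary is in place, the rest reduces to a short reindexing together with the sign computation already carried out in the proof of Lemma \ref{lem:4.4}.
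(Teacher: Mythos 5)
Your proposal is correct and follows essentially the same route as the paper: apply $\gamma \in G$ to the formula of Theorem \ref{thm:1.2}, use Lemma \ref{lem:4.4} (and $\det(\gamma\Gamma)=\mathrm{sgn}(\gamma)\det\Gamma$ from its proof) to see that numerator and denominator each pick up the factor $\mathrm{sgn}\,\gamma$, and reindex $\sigma \mapsto \gamma\sigma$ to conclude $\gamma a_{K_j,i}=a_{K_j,i}$, hence $a_{K_j,i}\in L^G=\Q$. Your extra remark spelling out the dictionary between the substitution $X_\sigma\mapsto X_{\gamma\sigma}$ and the genuine Galois action after evaluating $X_\sigma$ at $\sigma\xi$ is a sensible clarification of a step the paper leaves implicit.
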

\begin{proof}
 Let $\gamma \in G$. By the proof of Lemma \ref{lem:4.4}, we have 
$\det (\gamma \Gamma ) = \mathrm{sgn} (\gamma ) \det \Gamma $.
Hence, Lemma \ref{lem:4.4} implies 
\begin{align*}
 \gamma a_{K_j,i} & = \frac{1}{\mathrm{sgn} \gamma \det \Gamma_{\xi} }
\sum_{\substack{ \sigma \in G \\ \tau \in K_j }} \gamma \sigma \xi^i
\gamma (\partial_{  \sigma \tau } (\xi))\\
&= \frac{1}{\mathrm{sgn} \gamma \det \Gamma_{\xi} }
\sum_{\substack{ \sigma \in G \\ \tau \in K_j}} \gamma \sigma \xi^i
(\mathrm{sgn} \, \gamma ) \partial_{\gamma   \sigma \tau}   .
\end{align*}
If $\sigma $  runs through $G$, then so does $\gamma \sigma $ and hence,
 $\gamma a_{K_j,i} = a_{K_j,i}$ holds for all $\gamma \in G$.
\end{proof}

\section{Examples} \label{sec:5}
In this section, we give examples of our theorems.
In the following computation, we use Magma \cite{MR1484478}.

\begin{example}
In this example, we consider a sextic polynomial $F(X)$.
Assume that the Galois group of $F$
is isomorphic to $S_3$ and also that a root $\xi$ of $F(X)$ 
generates a normal basis of the splitting field $L$.
Moreover, we assume that $S_3 \subset S_6$ is ordered as 
\begin{multline*}
( 1,
   \tau= (1 \,  4)(2 \,  3)(5 \, 6),
    (1\,  2)(3\,  6)(4\,  5),     (1\,  6)(2\,  5)(3\,  4),    \\
   \sigma= (1\,  3\,  5)(2\,  4\,  6),
    (1\,  5\,  3)(2\,  6\,  4) ).
 \end{multline*}
As a transitive permutation group, the ID of the group is 6T2.
The conjugacy classes are
\[
 K_1 = \{1\}, \ K_2= \{ \tau , \tau \sigma  , \tau \sigma^2  \}, \ 
K_3  = \{ \sigma , \sigma^2 \} .
\]
Here we assume that $G$ acts on $\{ 1, \ldots , 6\}$ on the left.

We compute 
\[
 P=\begin{bmatrix}
1 & 1 & 1 & 1 & 1 & 1 \\
X_1 & X_{\tau} & X_{\tau \sigma } & X_{\tau \sigma^2} & X_{\sigma^2} & X_{\sigma} \\[3pt]
X_1^2 & X_{\tau}^2 & X_{\tau \sigma }^2 & X_{\tau \sigma^2}^2 & X_{\sigma^2}^2 & X_{\sigma}^2 \\[3pt]
X_1^3 & X_{\tau}^3 & X_{\tau \sigma }^3 & X_{\tau \sigma^2}^3 & X_{\sigma^2}^3 & X_{\sigma}^3 \\[3pt]
X_1^4 & X_{\tau}^4 & X_{\tau \sigma }^4 & X_{\tau \sigma^2}^4 & X_{\sigma^2}^4 & X_{\sigma}^4 \\[3pt]
X_1^5 & X_{\tau}^5 & X_{\tau \sigma }^5 & X_{\tau \sigma^2}^5 & X_{\sigma^2}^5 & X_{\sigma}^5 
\end{bmatrix} , \quad
\Gamma = 
\begin{bmatrix}
X_1 & X_{\tau} & X_{\tau \sigma } & X_{\tau \sigma^2} & X_{\sigma^2} & X_{\sigma} \\
X_{\tau} & X_1 & X_{\sigma} & X_{\sigma^2} & X_{\tau \sigma^2 } & X_{\tau \sigma} \\
X_{\tau \sigma } & X_{\sigma^2} & X_1 & X_{\sigma} & X_{\tau } & X_{\tau \sigma^2} \\
X_{\tau \sigma^2} & X_{\sigma} & X_{\sigma^2} & X_1 & X_{\tau \sigma} & X_{\tau  } \\
X_{\sigma} & X_{\tau \sigma^2 } & X_{\tau } & X_{\tau \sigma } & X_1 & X_{\sigma^2} \\
X_{\sigma^2} & X_{\tau \sigma} & X_{\tau \sigma^2} & X_{\tau  } & X_{\sigma} & X_1 
\end{bmatrix}.
\]
Since $\mathrm{Irr} (G) = \{ 1, \chi, \psi \}$ with
 nontrivial characters $\chi, \psi $ satisfying
$\chi (1)=1, \psi (1)=2$,
 we have
\begin{align*}
 \Delta (1)  =  & X_1 + X_{\tau} + X_{\tau \sigma } + X_{\tau \sigma^2} + X_{\sigma} + X_{\sigma^2} , \\
 \Delta (\chi ) =& X_1 - X_{\tau} - X_{\tau \sigma } - X_{\tau \sigma^2} + X_{\sigma} + X_{\sigma^2} , \\
 \Delta (\psi ) = & X_1^2 - X_1 X_{\sigma} - X_1 X_{\sigma^2} - X_{\tau}^2 + X_{\tau} X_{\tau \sigma } \\ 
      & + X_{\tau} X_{\tau \sigma^2} - X_{\tau \sigma }^2 + 
        X_{\tau \sigma } X_{\tau \sigma^2} - X_{\tau \sigma^2}^2 + X_{\sigma}^2 - X_{\sigma} X_{\sigma^2} + X_{\sigma^2}^2 
\end{align*}
and thus, by Proposition \ref{prop:4.1}, 
\[
 \det \Gamma = \Delta (1) \Delta(\chi) \Delta (\psi )^2
\]
holds.
We also compute 
\begin{multline*}
 \partial_{1} = \frac{1}{6} \Delta (\psi ) \,
\left( X_1^3 + X_1^2 X_{\sigma} + X_1^2 X_{\sigma^2} - X_1 X_{\tau}^2 - X_1 X_{\tau} X_{\tau \sigma } - 
        X_1 X_{\tau} X_{\tau \sigma^2} - X_1 X_{\tau \sigma }^2 \right. \\
- X_1 X_{\tau \sigma } X_{\tau \sigma^2}   - X_1 X_{\tau \sigma^2}^2 - 
        X_1 X_{\sigma} X_{\sigma^2} + X_{\tau} X_{\tau \sigma } X_{\sigma} + X_{\tau} X_{\tau \sigma } X_{\sigma^2} \\
\left. + X_{\tau} X_{\tau \sigma^2} X_{\sigma}  + 
        X_{\tau} X_{\tau \sigma^2} X_{\sigma^2}  + X_{\tau \sigma } X_{\tau \sigma^2} X_{\sigma} + X_{\tau \sigma } X_{\tau \sigma^2} X_{\sigma^2} - X_{\sigma}^2 X_{\sigma^2} - 
        X_{\sigma} X_{\sigma^2}^2 \right).
\end{multline*}
Other partial derivatives can be computed by using Lemma \ref{lem:4.4}. 
Since $G^{+} = \langle \sigma \rangle, $
Theorem \ref{thm:1.2} yields that 
\begin{align*}
 a_{K_1,i} & = \frac{1}{ \det \Gamma_{\xi}} (\mathrm{Tr}_{G^{+}} -\delta \mathrm{Tr}_{G^{+}}) \left(  \xi^{i}  \partial_{1} (\xi)  \right), \\
  a_{K_2,i} & = \frac{1}{ \det \Gamma_{\xi}} (\mathrm{Tr}_{G^{+}} -\delta \mathrm{Tr}_{G^{+}}) \left(  \xi^{i}  ( \partial_{\tau} + \partial_{\tau\sigma } + \partial_{\tau\sigma^2} ) (\xi)  \right), \\
  a_{K_3,i} & = \frac{1}{ \det \Gamma_{\xi}} (\mathrm{Tr}_{G^{+}} -\delta \mathrm{Tr}_{G^{+}}) \left(  \xi^{i}  ( \partial_{\sigma} + \partial_{\sigma^2 }) (\xi )    \right)
\end{align*}
for all $i \ge 0$.

See \cite[Proposition 1.5]{MR4755038} for a numerical example.
\end{example}

\begin{example}
 In this example, we consider the polynomial
\[
 G(X)=X^{12} + 8 X^{10} - 7X^8 + 266X^6 + 4X^4 + 1856X^2 + 64.
\]
The Galois group of $G(X)$ is isomorphic to $A_4$ considered as a subgroup of $S_{12}$.
As a transitive permutation group, the ID of the group is 12T4.
The conjugacy classes $K_i \, (1 \le i \le 4)$ are represented by
\begin{align*}
&\rho_1 = 1, && \rho_2 =  (1 \, 7)(2 \, 8)(3 \, 9)(4 \, 10)(5 \, 11)(6 \ 12), \\
 & \rho_3= (1\, 9\, 5)(2\, 4\, 3)(6\, 8\, 7)(10\, 12\, 11), & & \rho_4 = (1\, 5\, 9)(2\, 3\, 4)(6\, 7\, 8)(10\, 11\, 12).
\end{align*}
If $a$ is a root of $F(X)$, then
\[
 \xi = \frac{1}{8}(-a^9 - 2 a^7 - 3 a^5 + 4 a^4 - 2 a^3 + 8 a^2 + 4 a - 8)
\]
generates a normal basis\footnote{Here we randomly choose an element $\xi$ and check whether $\det \Gamma_{\xi} = 0$ or not.} of the splitting field $L$ 
of $G(X)$.
The minimal polynomial of $\xi$ is 
\begin{multline} \label{eq:5.1}
 F(X) = X^{12} - 50 X^{11} + 25493717 X^{10} + 1180522454 X^9 + 885646536828 X^8 \\ 
+     32213365427586 X^7 + 7576066505433801 X^6   \\
+ 172513732994029238 X^5 +    1238519020079163369 X^4 \\ + 8381731881286962404 X^3 +
    137777496999186460164 X^2 \\ + 262600792236322628544 X + 
    133318780807273404444.
\end{multline}
We compute the initial $d$ terms for $\xi$ by Theorem \ref{thm:1.1} and obtain 
for $K_1$
\[
\begin{bmatrix}
1/50 \\
1 \\
-4135364071611241/14424104400 \\
-260547744288653071/4808034800 \\
26300466432052004553821/3606026100 \\
500827742889717806352169/240401740 \\
-668975249439161245987029364843/3606026100 \\
-4263502226999619804770492844999/60100435 \\
4252431954040670758350722151769286953/901506525 \\
2718092977698862953735451274910193880983/1202008700 \\
-108084862311333479116432375783494868959858356/901506525 \\
-249332171693508539722479652354516785602310989741/3606026100 
\end{bmatrix},
\]
and for $K_2$
\[
 \begin{bmatrix}
3/50 \\
0 \\
-4964786092726823/14424104400 \\
-178110438425349713/4808034800 \\
31574757808051685512063/3606026100 \\
429793472896875575390307/240401740 \\
-803379315883723829261636004029/3606026100 \\
-4027348355322179825440668139282/60100435 \\
5108370449133905137640756221859719934/901506525 \\
2707893528532495505903540005737181936049/1202008700 \\
-129880723115465709986373470186715277529010493/901506525 \\
-256961308855093814076389357839704579900571992223/3606026100 
\end{bmatrix},
\]
and for $K_3$
\[
 \begin{bmatrix}
2/25 \\
0 \\
-379245606/1975 \\
-54786318908/1975 \\
9638668421452394/1975 \\
464962850044199918/395 \\
-245209041778488707648152/1975 \\
-16569309380188555032415002/395 \\
6235871278796055129644393225618/1975 \\
2710965787654331821079588907831936/1975 \\
-158525166496531958209545246760654609686/1975 \\
-84319186486699737581686554509272598366474/1975 
\end{bmatrix},
\]
and finally for $K_4$
\[
 \begin{bmatrix}
2/25 \\
0 \\
-388634156/1975 \\
-55962982458/1975 \\
9848511058081944/1975 \\
475084503041337148/395 \\
-250546971352416621277002/1975 \\
-16930004890600853013935032/395 \\
6371619289231042593431953974568/1975 \\
2769980509985893459404994437740986/1975 \\
-161976083777001208967454900637719599036/1975 \\
-86154721778368501577407226916799238923024/1975 
\end{bmatrix}.
\]
Multiplying the companion matrix $R_F$ of \eqref{eq:5.1} to
the above vectors to compute $a_p$,
 we can derive the following classification from Theorem \ref{thm:1.1} :
\begin{align*}
 & \mathrm{Frob}_{p} \in K_1 \iff && p \in \{ 97 , 173, 269 \}, \\
 & \mathrm{Frob}_{p} \in K_2 \iff && p \in \{ 41, 47, 59,113,127,131,151,193, 199,
211, 223,257,293 \}, \\
 & \mathrm{Frob}_{p} \in K_3 \iff && p \in \{19,23,31,37,53,67,83,89,109,139,149,
167,191,227, 
\\ && & 239,263,277,281 \}, && \\
 & \mathrm{Frob}_{p} \in K_4  \iff && p \in \{7,13,17,29,61,71,73,101,103,157,163,
179,181,197, && \\ & && 229,233,241,251,271,283 \}
\end{align*}
for primes up to $300$.
Note that $|K_1|=1,|K_2|=3, \text{ and } |K_3|=|K_4|=4 $.

There are several exceptional primes for each sequence even up to $300$
including $ 2, 3, 5, 11, 43, 107, 137$ dividing the discriminant of $F$.

\end{example}

\section{Concluding remark} \label{sec:6}
As discussed in \cite[Section 4]{MR4755038},
it is an important problem to determine the exceptional finite set $S$ of
primes for \eqref{eq:1.2}. The set $S$ obviously contains the ramified primes 
in $L/\Q$ and also primes dividing the denominators of 
the $d$ initial terms, which are divisors of $\det \Gamma_{\xi}$.

This section briefly discusses the relationship between this problem and
the integral model of a certain algebraic torus.

We consider the $d$-dimensional torus $ T=\qst{L/\Q}$ defined over $\Q$.
If we take a power basis $(1,\xi, \ldots , \xi^{d-1})$ of $L$, then
we can identify the generic point of $T$ as a matrix of 
regular representation of $L^{\times}$, which is a polynomial 
of $R_F$ with rational coefficients.
The $d$-initial terms of the sequence can be considered as a rational point in
$T(\Q )$. Since $T (\Q) \cong L^{\times}$, we have an element of $L$
(see $\Lambda $ in Lemma \ref{lem:2.1}). Since $T$ splits over $L$, we have \eqref{eq:2.0.5}
and also the duality between the character group $\widehat{T}$ (see Lemma 
\ref{lem:2.3}).
Therefore, the determination of the exceptional set $S$ is closely related 
to choosing a good integral model of $T$.
In \cite[Sections 10.4 and 11.2]{MR99g:20090}, 
a canonical integral model of a torus
over global fields is proposed and is shown that the model has very good 
reduction, meaning that the mod $p$ reduction is also a torus, outside primes
dividing the discriminant of $L$. 
Although we also have to find a model whose root generates a normal basis,
the study of integral models of the torus could be a clue to determining $S$.
In the special case where $[L:\Q ]=2$, this problem is carefully treated in 
\cite{MR3902117} in different language.

\subsection*{Acknowledgments}
The authors wish to express their thanks to the anonymous referees for
their helpful comments that improved the quality of the manuscript.
The authors also thank Genki Koda, Yuta Katayama, and Nozomu Suzuki for useful discussions 
during the preparation of this paper.

\subsection*{Ethics approval}
Not applicable.


\providecommand{\bysame}{\leavevmode\hbox to3em{\hrulefill}\thinspace}

 \begin{flushleft}
Haruto Hori \\ 
 e-mail: 1122523@ed.tus.ac.jp \\
 Masanari Kida \\
 e-mail: kida@rs.tus.ac.jp \\
 Department of Mathematics \\
 Tokyo University of Science \\
 1-3 Kagurazaka Shinjuku Tokyo 162-8601 Japan 
 \end{flushleft}

\end{document}